\numberwithin{equation}{section}
\numberwithin{subsection}{section}
\newlength{\myarrowsize} 
    \newlength{\myoldlinewidth}
\newtheorem*{namedtheorem}{\theoremname}
\newcommand{\theoremname}{testing}
\newtheorem{theo}{Theorem}[section]
\newtheorem{prop}[theo]{Proposition}
\newtheorem{prop-def}[theo]
{Proposition-Definition}
\newtheorem{lemm}[theo]{Lemma}
\newcommand \fr{\operatorname{F}}
\theoremstyle{definition}
\newtheorem{defi}[theo]{Definition}
\newtheorem{rema}[theo]{Remark}
\newtheorem{conj}[theo]{Conjecture}
\theoremstyle{remark}
\newcommand\GG{\mathbb{G}}
 \newcommand\ZZ{\mathbb{Z}}
\newcommand\mto{\ifinner\mapsto\else\longmapsto\fi}
\newcommand\too{\longrightarrow}
\newcommand \In{\subseteq}
\renewcommand\H{\operatorname{H}}
\def\displaytimes_#1{\mathrel{\mathop{\times}\limits_{#1}}}
\def\displayotimes_#1{\mathrel{\mathop{\bigotimes}\limits_{#1}}}
\renewcommand\hom{\operatorname{Hom}}
\newcommand\spec{\operatorname{Spec}}
\newcommand{\ed}{\operatorname{ed}}
\newcommand{\trdeg}{\operatorname{tr\,deg}}
\begin{document}

\title{Essential dimension of inifinitesimal unipotent  group schemes}

\author{Dajano Tossici}

\address{Institut de Math\'ematiques de Bordeaux, 351 Cours de la Liberation\\
33 405 Talence \\ France}
\email[Tossici]{Dajano.Tossici@math.u-bordeaux1.fr}



 \begin{abstract}
 We propose a generalization of Ledet conjecture, which predicts the essential dimension of cyclic $p$-groups in characteristic $p$, for finite commutative unipotent group schemes.  And we show some evidence and some consequences of this new conjecture. 
 \end{abstract}


\maketitle

\section{Introduction}

The notion of essential dimension of a finite group over a field $k$ was introduced by Buhler and Reichstein (\cite{BR}). It was
later extended to various contexts. First Reichstein
generalized it to linear algebraic groups (\cite{Re}) in
characteristic zero; afterwards Merkurjev gave a general definition for functors from the
category of extension fields of the base field $k$ to the category of sets (\cite{BF}).
In particular one can consider the essential dimension of group schemes over a field (see Definition \ref{def: ed usuale}).


If $G$ is a flat group scheme  of locally finite presentation  over a scheme $S$, a
 $G$-torsor over $X$ is an $S$-scheme $Y$ with a left $G$-action by $X$-automorphisms  and a  faithfully flat and locally of finite presentation  morphism $Y\to X$ over $S$  such that the map $G\times_S Y\to Y\times_X Y$ given by $(g,y)\mapsto (gy,y)$ is an isomorphism.
 We recall that isomorphism classes of $G$-torsors over $X$ are classified by the pointed set $\H^1(X,G)$ if $G$ is affine (\cite[III, Theorem 4.3]{Mi}) or $G$ an abelian scheme and $X$ is regular \cite[Proposition XIII 2.6]{R}. We will restrict to these two cases.
If $G$ is commutative, then $\H^{1}(X, G)$ is a
group, and coincides with the cohomology group of $G$ in the fppf
topology.

\begin{defi}\label{def: ed usuale}
Let $G$ be a group scheme of finite type over a field $k$. Let
$k\In K$ be an extension field and $[\xi]\in \H^1(\spec(K),G)$ the
class of a $G$-torsor $\xi$. Then the essential dimension of $\xi$
over $k$, which we denote by $\ed_k\xi$, is the smallest
non-negative integer $n$ such that 
\begin{itemize}
\item[(i)]
there exists a subfield $L$ of
$K$ containing $k$, with $\trdeg(L/k) = n$,  
\item[(ii)]
such that $[\xi]$ is
in the image of the morphism 
$$
\H^1(\spec(L),G)\too
\H^1(\spec(K),G).$$
\end{itemize}
The essential dimension of $G$ over $k$, which we denote by
$\ed_k G$, is the supremum  of $\ed_k \xi$, where $K/k$ ranges
through all the extension of $K$, and $\xi$ ranges through all the
$G$-torsors over $\spec(K)$.

\end{defi}
We study essential dimension of finite commutative unipotent group scheme over a field $k$ of positive characteristic $p$.
We recall the following conjecture due to Ledet  (\cite{Le}).
\begin{conj}\label{conj Ledet}
The essential
dimension of the cyclic group of order $p^n$ over $k$ is $n$.
\end{conj}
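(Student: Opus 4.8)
The plan is to prove the two inequalities $\ed_k(\ZZ/p^n) \le n$ and $\ed_k(\ZZ/p^n) \ge n$ separately, the first being routine and the second being the heart of the matter. For the upper bound I would invoke Witt--Artin--Schreier theory: over a field $K$ of characteristic $p$, the group classifying $\ZZ/p^n$-torsors is
$$\H^1(\spec K, \ZZ/p^n) \cong W_n(K)/\wp(W_n(K)),$$
where $W_n$ is the ring scheme of Witt vectors of length $n$ and $\wp = F - \id$ is the Artin--Schreier--Witt isogeny. Since every class is represented by a single Witt vector $(a_0, \dots, a_{n-1})$, the corresponding torsor $\xi$ is already defined over $k(a_0, \dots, a_{n-1})$, a subfield of transcendence degree at most $n$ over $k$; by Definition \ref{def: ed usuale} this gives $\ed_k \xi \le n$ for every $\xi$, hence $\ed_k(\ZZ/p^n) \le n$.

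For the lower bound I would first extract a versal torsor from the same exact sequence. The Artin--Schreier--Witt sequence of fppf sheaves
$$0 \to \ZZ/p^n \to W_n \xrightarrow{\ \wp\ } W_n \to 0$$
exhibits $\wp \colon W_n \to W_n$ as a $\ZZ/p^n$-torsor whose total space $W_n \cong \AA^n$ is rational of dimension $n$. Because every class in $\H^1(\spec K, \ZZ/p^n)$ is pulled back from a $K$-point of the base $W_n$, this torsor is versal, so $\ed_k(\ZZ/p^n)$ is computed by its generic fibre. It therefore suffices to show that the generic torsor, given by a Witt vector $(t_0, \dots, t_{n-1})$ of algebraically independent coordinates, cannot be compressed to transcendence degree $< n$.

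To establish this incompressibility I would use cohomological invariants valued in Kato's differential cohomology groups $\H^{q+1}_{p^n}(K)$, which in characteristic $p$ are built from the de Rham--Witt complex and receive symbols $\{\mathbf{a}; b_1, \dots, b_q\}$ attached to a Witt vector $\mathbf{a} \in W_n(K)$ together with units $b_i \in K^\times$. The idea is to attach to the universal torsor, after adjoining auxiliary independent parameters $b_1, \dots, b_q$, a nonzero invariant whose iterated residues detect all $n$ Witt coordinates simultaneously, and then to argue by a residue and specialization computation that annihilating this invariant forces at least $n$ independent transcendentals to survive. The \textbf{main obstacle} is precisely this non-degeneracy statement: one must control the de Rham--Witt symbol modulo the image of $\wp$ and rule out that any specialization lowering the transcendence degree below $n$ could kill it. A naive induction along $0 \to \ZZ/p \to \ZZ/p^n \to \ZZ/p^{n-1} \to 0$ will not suffice, since essential dimension is not additive in extensions of group schemes; instead the full length-$n$ filtration of $W_n$ must be handled simultaneously through the graded pieces of the de Rham--Witt complex, and it is the interaction between the Frobenius weight filtration and the symbol length $q$ that makes the lower bound genuinely delicate.
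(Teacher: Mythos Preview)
The statement you are attempting to prove is not a theorem in the paper but an \emph{open conjecture} (Ledet's conjecture); the paper gives no proof of it, and indeed the entire point of the article is to propose a generalization (Conjecture~\ref{conj1}) and establish it only in special cases. So there is no ``paper's own proof'' to compare against.

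Your upper bound is fine and matches what the paper uses implicitly: the embedding $\ZZ/p^n\hookrightarrow W_{n,k}$ into the special group of Witt vectors of dimension $n$ gives $\ed_k(\ZZ/p^n)\le n$ (this is exactly the argument in the proof of Lemma~\ref{lemm:Ledet equivalent}).

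The genuine gap is the lower bound. Your plan to use Kato's $\H^{q+1}_{p^n}$ symbols and residues in the de Rham--Witt complex is a natural line of attack, but you have not identified a mechanism that forces \emph{all $n$} Witt coordinates to be algebraically independent after compression; you yourself flag the ``main obstacle'' without resolving it. The difficulty is real: the filtration pieces $V^iW_n/V^{i+1}W_n$ each contribute an $\alpha_p$-type invariant of essential dimension one, but there is no known cohomological invariant in characteristic $p$ whose symbol length or weight filtration jumps by exactly one at each step and survives arbitrary specialization. The naive induction you correctly reject fails, and the replacement you gesture at (handling the full length-$n$ filtration simultaneously via the graded de Rham--Witt complex) is precisely the step nobody has carried out. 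Only small cases are known: $n=1$ is trivial, and $n=2$ is cited in the paper from \cite[Proposition~7.10]{BF}. For $n\ge 3$ the inequality $\ed_k(\ZZ/p^n)\ge n$ remains open, so your proposal is, at best, a program rather than a proof.
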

Here we propose a generalization of this conjecture. 
For any commutative group scheme $G$ over $k$ one can define a morphism $V:G^{(p)}\to G$, where $G^{(p)}$ is the fiber product of the morphism $G\to \spec(k)$ and the absolute Frobenius $\spec(k)\to \spec(k)$. This morphism is called Verschiebung. See \cite[IV,\S 3, $n^o$ 4]{DG} for the definition. 
We remark that it can be defined also as the dual of the relative Frobenius $\fr:G^{\vee}\to {G^{\vee}}^{(p)}$ where $G^{\vee}$ is the Cartier dual of $G$.

%

\begin{defi}
Let $G$ be a commutative unipotent group scheme over $k$. We call $V$-\textit{order} for $G$ the minimal  integer $n\ge 1$ such that $V^n= 0$. We note it by $n_V(G)$.
\end{defi}

This number exists since  $G$ is unipotent.

\begin{conj}\label{conj1}
Let $k$ be a field of positive characteristic and let $G$ be a finite unipotent commutative group
  scheme.  Then $\ed_k G\ge n_V(G)$.
\end{conj}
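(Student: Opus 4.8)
The plan is to prove the inequality by producing a sufficiently generic (versal) $G$-torsor and showing it cannot be defined over a subfield of transcendence degree less than $n:=n_V(G)$. Since essential dimension does not increase under field extension, I may first replace $k$ by its perfect closure (this does not change $n_V(G)$, and $\ed_k G\ge \ed_{k^{\mathrm{perf}}}G$), so I assume $k$ perfect. The first step is to make $\H^1(\spec K,G)$ explicit. As $G$ is finite commutative unipotent, I choose an embedding into a smooth connected unipotent group, i.e. an exact sequence
\[
0\too G\too W\overset{\phi}\too W'\too 0
\]
with $W$ a product of Witt vector groups and $W'$ smooth connected unipotent. Over any $K\supseteq k$ one has $\H^1(\spec K,W)=\H^1(\spec K,W')=0$, because both are iterated extensions of copies of $\ga$ and $\H^1(\spec K,\ga)=0$. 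The long exact sequence then gives a natural identification $\H^1(\spec K,G)\cong\coker\bigl(\phi\colon W(K)\too W'(K)\bigr)$, so a $G$-torsor is a Witt vector $a\in W'(K)$ modulo $\im\phi$, i.e. a solution class of a generalized Artin--Schreier--Witt equation; and $\xi$ descends to $L$ with $k\In L\In K$ exactly when $a$ can be adjusted modulo $\im\phi$ to lie in $W'(L)$.

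The second step is to locate where the Verschiebung order enters. Under Cartier duality $V$ on $G$ is the relative Frobenius $\fr$ on $G^{\vee}$, and the decreasing filtration $G\supseteq\im(V)\supseteq\cdots\supseteq\im(V^{n})=0$ is the combinatorial shadow of the Witt length of the classes in $\coker\phi$. The aim is to construct a functorial invariant
\[
\partial\colon \H^1(\spec K,G)\too \H^{1}_{p^{n}}(K)
\]
into a length-$n$ Kato--de Rham--Witt group (a suitable subquotient of $W_n\Omega^1_K$), with the property that its nonvanishing on the versal class is forced precisely by $V^{\,n-1}\ne 0$: heuristically each Verschiebung step contributes one level to the de Rham--Witt form detecting the class, so the invariant of the versal torsor is a nonzero form of Witt length exactly $n$.

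The third step is incompressibility by residues. I choose a chain of $n$ independent discrete valuations $v_1,\dots,v_n$ on $K$ adapted to the Witt-length filtration and form the iterated residue of $\partial(\xi)$. The key computation, which I would verify through the compatibility of the de Rham--Witt residue with the filtration by $\im(V^{i})$, is that each residue lowers the Witt length by one, so the $n$-fold iterated residue of the versal class is a nonzero element of $\H^{0}$ of the final residue field. On the other hand, if $\xi$ descended to $L$ with $\trdeg(L/k)=d<n$, the specialization principle for de Rham--Witt classes forces every $(d+1)$-fold iterated residue of $\partial(\xi)$ to vanish, since each residue consumes one of the $d$ transcendence generators of $L$. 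Comparing the two statements gives $d\ge n$ for every field of definition, that is $\ed_k G\ge n_V(G)$.

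The hard part, and the reason the statement is a conjecture rather than a theorem, is the second step: constructing the length-$n$ invariant $\partial$ and proving it is genuinely nonzero on a versal torsor for arbitrary unipotent $G$. Already for the constant group $\ZZ/p^{n}$ this is the lower-bound half of Ledet's Conjecture~\ref{conj Ledet}, which remains open; the subtlety is that in characteristic $p$ the usual Galois-cohomological invariants vanish, so one must work with $p$-torsion differential-form cohomology, where controlling the versal class under \emph{all} compressions, not just the monomial degenerations detected by the chosen valuations, is delicate. I would therefore first establish the bound where the Witt-length filtration is essentially cyclic, for instance $G=W_n[\fr]$ or $G$ of $V$-order $2$, so that $\partial$ and its residues can be written down by hand, and then attempt to reduce the general case to these by dévissage along the filtration $G\supseteq\im(V)\supseteq\cdots\supseteq\im(V^{n})=0$.
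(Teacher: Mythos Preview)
The statement you are attempting to prove is a \emph{conjecture} in the paper, not a theorem: the paper gives no proof of it, only partial evidence (the height~$1$ case, the equivalence with Ledet's conjecture in the \'etale case, and some reduction lemmas). So there is no ``paper's own proof'' to compare your proposal against.

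You in fact recognise this yourself: your third paragraph admits that the construction of the invariant $\partial$ and the verification that it is nonzero on a versal torsor is ``the reason the statement is a conjecture rather than a theorem'', and that already for $G=\ZZ/p^n\ZZ$ this is the open lower-bound half of Ledet's conjecture. That is exactly right, and it means your proposal is not a proof but a heuristic programme. The first step (embedding $G$ in a product of Witt groups and rewriting $\H^1$ as a cokernel) is standard and correct; the third step (iterated residues forcing transcendence degree) is a reasonable template borrowed from the characteristic-zero/cohomological-invariant story. The genuine gap is entirely in the second step: no one currently knows how to build, for general finite unipotent commutative $G$, a functorial de Rham--Witt invariant of length $n_V(G)$ that is provably nonzero on a versal class and compatible with specialisation in the way your residue argument requires. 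Your proposed d\'evissage along $\im(V^i)$ does not obviously help, since essential dimension is not known to be subadditive in extensions, and the paper's own Lemma on epimorphisms and monomorphisms only transfers the conjecture between groups of the \emph{same} $V$-order.

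In short: your outline is a plausible strategy and you have correctly located the obstruction, but it does not constitute a proof, and neither does the paper claim one.
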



In fact it is easy to see that Conjecture \ref{conj Ledet} is equivalent to Conjecture \ref{conj1} in the case of finite commutative \'etale group schemes (Lemma \ref{lemm:Ledet equivalent}).

We  give some evidences and consequences of the conjecture. For instance we prove that the above conjecture is true if $G$ is annihilated by the relative Frobenius (see Proposition \ref{prop:conj height 1}).
Finally we prove in Proposition \ref{prop:ed ab var}, under the assumption that the conjecture is true, that the essential dimension of a nontrivial abelian variety over a field of positive characteristic is $+\infty$. We recall that the same statement is true over number fields \cite[Theorem 2]{BS}. While over an algebraically closed field of characteristic zero it is two times the dimension of the abelian variety \cite[Theorem 1.2]{Bro}.

\subsection*{Acknowledgements} I would like to thank  A.~Vistoli   for useful comments and conversations. 
I have been partially supported by the project ANR-10-JCJC 0107 from the
Agence Nationale de la Recherche.
This work was partly elaborated   during a stay  at Max Planck Institute of Bonn.

\section{Essential dimension of finite unipotent commutative group schemes and abelian varieties in positive characteristic}
In the following $k$ is a field of positive characteristic $p$.
%
%

%
The following Lemma proves that in fact Ledet Conjecture is just a particular case of this conjecture.

\begin{lemm} \label{lemm:Ledet equivalent}
Ledet conjecture is equivalent to Conjecture \ref{conj1} restricted to finite unipotent commutative \'etale group schemes.
\end{lemm}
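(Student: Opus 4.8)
The plan is to reduce everything to the cyclic case, by first computing the $V$-order of an étale group explicitly and then exploiting that, after base change, such a group splits as a product of cyclic groups.

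First I would record that for a finite unipotent commutative étale $G$ the relative Frobenius $\fr\colon G\to G^{(p)}$ is an isomorphism, since a finite $k$-scheme is étale precisely when its relative Frobenius is invertible. Combining this with the fundamental identity $V\circ \fr=[p]_G$ (and the analogous identity on each twist $G^{(p^i)}$, which is again étale), a straightforward induction shows that the $n$-fold Verschiebung $V^n\colon G^{(p^n)}\to G$ equals $[p^n]_G$ composed with an isomorphism. Hence $V^n=0$ if and only if $[p^n]=0$ on $G$, so $n_V(G)$ is the least integer $n$ with $p^nG=0$; equivalently $p^{n_V(G)}$ is the exponent of $G$. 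Applied to the constant group $\ZZ/p^n$ this gives $n_V(\ZZ/p^n)=n$.

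With this computation Conjecture \ref{conj1} for $G=\ZZ/p^n$ reads exactly $\ed_k(\ZZ/p^n)\ge n$. Since the opposite inequality $\ed_k(\ZZ/p^n)\le n$ holds unconditionally --- it follows from the Artin--Schreier--Witt description of $\ZZ/p^n$-torsors by Witt vectors of length $n$ (see \cite{Le}) --- Conjecture \ref{conj1} restricted to cyclic groups is equivalent to the equality predicted by Ledet. In particular Conjecture \ref{conj1} for étale groups implies Ledet's conjecture, cyclic groups being among the groups it constrains. For the converse I would assume Ledet's conjecture and deduce $\ed_k G\ge n_V(G)$ for an arbitrary étale unipotent commutative $G$. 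Put $n=n_V(G)$, so that $\exp(G)=p^n$ by the first step, and choose a finite separable extension $k'/k$ over which $G$ becomes constant, say $G_{k'}\cong \prod_i \ZZ/p^{a_i}$ with $\max_i a_i=n$; then $\ZZ/p^n$ is a direct factor of $G_{k'}$. A projection--section pair for a direct factor yields $\ed_{k'}(G_{k'})\ge \ed_{k'}(\ZZ/p^n)$, coming from the fact that $\H^1(-,\ZZ/p^n)\to\H^1(-,G_{k'})\to\H^1(-,\ZZ/p^n)$ composes to the identity; Ledet's conjecture over $k'$ makes the right-hand side equal to $n$. As essential dimension does not increase under base field extension, $\ed_k G\ge \ed_{k'}(G_{k'})\ge n=n_V(G)$, which is Conjecture \ref{conj1} for $G$.

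The only genuinely delicate points are the Frobenius--Verschiebung bookkeeping in the first step, where one must track the successive twists $G^{(p^i)}$ and check that the factors $[p]$ can be collected past the isomorphisms $\fr$, and the logical status of the base field in the last step: the argument invokes Ledet's conjecture over the splitting field $k'$ rather than over $k$, so the equivalence is to be understood for the conjectures ranging over all fields of characteristic $p$. Everything else is formal.
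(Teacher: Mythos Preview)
Your argument is correct and follows essentially the same route as the paper: compute $n_V$ for \'etale groups via the identity $V\circ\fr=[p]$, deduce that Conjecture~\ref{conj1} for $\ZZ/p^n\ZZ$ is precisely Ledet, and for the converse base-change to split $G$ as a product of cyclic $p$-groups and compare with the direct summand of maximal order, using that essential dimension does not increase under field extension (the paper passes to an algebraic closure rather than a finite separable extension, but this is immaterial). Your closing caveat about the base field is apt and applies equally to the paper's own proof, which likewise invokes Ledet over the extension field rather than over $k$.
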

\begin{proof}

In fact the Verschiebung of $\ZZ/p^n\ZZ$ is just multiplication by $p^n$. So, since 
 $p^{n-1}$ is not trivial over $\ZZ/p^n\ZZ$, then by
the above conjecture we have that $\ed_k\ZZ/p^n\ZZ\ge n$.
On the other hand since $\ZZ/p^n\ZZ$ is contained in the special group of Witt vectors
of length $n$, which has dimension $n$, then $\ed_k\ZZ/p^n\ZZ\le n$.

Conversely, let us suppose Ledet conjecture is true. Let $G$  be a  finite commutative unipotent \textit{\'etale} group scheme over a field $k$ of positive characteristic. Since essential dimension does not increase by field extension (\cite[Prop 1.5]{BF}) we can assume $k$ algebraically closed. So we have that $G$ is the product of cyclic $p$-groups. Let us take a direct summand of order maximum $p^n$. Then $n$ is such that $V^n=0$ but $V^{n-1}\neq 0$. We have that $\ed_kG\ge \ed_k\ZZ/p^n\ZZ=n$, where the last equality follows from Ledet Conjecture. So we are done. 
\end{proof}
\begin{rema}\label{rema:Ledet equivalent level n}
 In fact we have proved slightly more: Ledet conjecture for a fixed $n$ is equivalent to  Conjecture \ref{conj1} restricted to finite unipotent commutative \'etale group schemes with $V$-order equal to $n$.
\end{rema}

%

Some cases of Conjecture \ref{conj1} can be proved using results of \cite{TV}. 
For instance we have the following proposition which treats the case orthogonal to the \'etale case. 

\begin{prop}\label{prop:conj height 1}
The conjecture \ref{conj1} is true for finite unipotent commutative group schemes of height $1$ (i.e. annihilated by Frobenius).
\end{prop}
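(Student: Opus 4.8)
The plan is to reduce the statement to the essential‑dimension computation for height‑one group schemes from \cite{TV}, and then to bound $n_V(G)$ by a short linear‑algebra argument. The input I will take from \cite{TV} is that a finite infinitesimal group scheme of height one over a field has essential dimension equal to the dimension of its Lie algebra; in particular one has the lower bound $\ed\, H \ge \dim_k \Lie H$ for every such $H$. Granting this, the proposition reduces to the purely numerical inequality $\dim_k \Lie G \ge n_V(G)$.

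First I would reduce to the case where $k$ is algebraically closed, hence perfect. Both quantities behave well under base change: $\dim_k \Lie G$ is invariant, and, since the Verschiebung commutes with base change and a homomorphism of group schemes vanishes if and only if it does so after a faithfully flat extension, $n_V(G_{k'}) = n_V(G)$ for every extension $k'/k$. Combining this with the fact that essential dimension does not grow under field extension (\cite[Prop.~1.5]{BF}) yields
\[
\ed_k G \ \ge\ \ed_{\bar k} G_{\bar k}\ \ge\ \dim_{\bar k}\Lie G_{\bar k}\ =\ \dim_k \Lie G ,
\]
so it suffices to prove $\dim_k \Lie G \ge n_V(G)$ after replacing $k$ by $\bar k$.

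Now assume $k$ perfect and pass to the (contravariant) Dieudonn\'e module $M$ of $G$. Since $G$ has height one, $F_G = 0$, hence $[p]_G = V_G\circ F_G = 0$ and $G$ is killed by $p$; thus $M$ is a finite‑dimensional $k$-vector space, of dimension $\log_p|G| = \dim_k \Lie G =: d$ (the middle equality using height one). The Verschiebung induces the semilinear operator $V$ on $M$, and by functoriality $V_G^n = 0$ if and only if $V^n = 0$, while unipotence of $G$ means precisely that $V$ is nilpotent; hence $n_V(G)$ is exactly the nilpotency index of $V$. Because the kernels $\ker V \subsetneq \ker V^2 \subsetneq \cdots$ strictly increase until they exhaust the $d$-dimensional space $M$, that index is at most $d$, giving $n_V(G)\le d = \dim_k \Lie G$, as required.

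The only substantial ingredient is the lower bound $\ed\, H \ge \dim_k\Lie H$ of \cite{TV}; everything else is the bookkeeping that identifies $n_V(G)$ with the nilpotency index of the Verschiebung on $M$ (equivalently, with that of the $p$-operation on $\Lie G$, via $V_G = (F_{G^\vee})^\vee$) together with the trivial remark that a nilpotent operator on a $d$-dimensional space has index at most $d$. The one point to keep honest is the convention matching the group‑scheme Verschiebung with the Dieudonn\'e operator $V$; but since we only need the inequality $n_V \le \dim_k \Lie$, any convention under which unipotence corresponds to nilpotence of $V$ is enough.
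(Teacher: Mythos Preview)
Your proof is correct and follows essentially the same strategy as the paper: both invoke the lower bound $\ed_k G \ge \dim_k \Lie G$ from \cite{TV} and then reduce to the inequality $n_V(G)\le \dim_k\Lie G = \log_p|G|$. The only difference is packaging: the paper proves this last inequality directly on the group scheme (the image of $V^i$ strictly shrinks at each step, so $V^{|G|}$ vanishes), whereas you pass to the Dieudonn\'e module and phrase it as the fact that a nilpotent (semi)linear operator on a $d$-dimensional space has index at most $d$---the same argument in dual form, with the paper's version having the mild advantage of not needing perfectness of $k$ or Dieudonn\'e theory.
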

\begin{proof}
In \cite[Theorem 1.2]{TV} it has been proved that for a finite group scheme the essential dimension is greater than or equal the dimension of its Lie Algebra.
In the case of the proposition the order of the group scheme is $p^n$, where $n$ is the dimension of the Lie Algebra. 

Since, by the lemma below, we have that $V^n=0$ the conjecture is proven in this case. 
\end{proof}
\begin{lemm} Let $k$ be a field of characteristic $p$.
The operator $V^n$ is trivial over any unipotent commutative group scheme $G$ of order $p^n$.  
\end{lemm}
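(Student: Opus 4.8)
The plan is to pass to the Cartier dual and convert the statement about the Verschiebung into one about the Frobenius. Recall that for a finite commutative group scheme the relative Frobenius and the Verschiebung are interchanged by Cartier duality: $V_G = (\fr_{G^\vee})^\vee$, and, iterating, the $n$-fold Verschiebung $V^n\colon G^{(p^n)}\to G$ is Cartier-dual to the $n$-fold relative Frobenius $\fr^n\colon G^\vee\to (G^\vee)^{(p^n)}$. Since Cartier duality is a (contravariant) exact anti-equivalence on finite commutative group schemes, a morphism vanishes if and only if its dual does; hence $V^n=0$ on $G$ if and only if $\fr^n=0$ on $G^\vee$, i.e. $G^\vee$ has height at most $n$. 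As $|G^\vee|=|G|=p^n$, it therefore suffices to prove the following: \emph{a finite commutative group scheme $H$ of order $p^n$ on which $\fr$ is nilpotent has height at most $n$.}

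Next I would check that $H=G^\vee$ is of the required shape. Because $G$ is unipotent, $V_G$ is nilpotent (this is exactly what guarantees the existence of $n_V(G)$), so by the duality above $\fr_{G^\vee}$ is nilpotent; equivalently $G^\vee$ is infinitesimal (connected), its augmentation ideal being nilpotent. Thus the reduction of the previous paragraph applies to $H=G^\vee$.

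For the height bound I would use the Frobenius-kernel filtration
$$1 \subseteq H[\fr] \subseteq H[\fr^2] \subseteq \cdots \subseteq H[\fr^h] = H,$$
where $h$ is the height and $H[\fr^i]=\ker(\fr^i)$. The two facts needed are that this chain increases \emph{strictly} up to $H$, and that each successive quotient has order at least $p$. The first follows because a nontrivial connected finite group scheme has nontrivial Frobenius kernel: if $\fr$ were a monomorphism it would be an isomorphism (source and target have the same order), so $H$ would be \'etale, and a connected \'etale group scheme is trivial; applying this to the connected quotients $H/H[\fr^i]$ shows $H[\fr^i]\subsetneq H[\fr^{i+1}]$ for $i<h$. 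The second is immediate, since every quotient has $p$-power order. Multiplying orders along the filtration gives $p^n=|H|\ge p^h$, whence $h\le n$ and $\fr^n=0$, as desired.

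The routine but delicate point is the bookkeeping of Frobenius twists: one must define the iterated Verschiebung and Frobenius with the correct domains $G^{(p^n)}$ and targets $(G^\vee)^{(p^n)}$ and verify that duality genuinely interchanges the $n$-fold iterates, and likewise that $H[\fr^{i+1}]/H[\fr^i]$ is the Frobenius kernel of the (twisted) quotient $H/H[\fr^i]$. I expect this twist-bookkeeping, rather than any conceptual difficulty, to be the main thing to get right; note that none of the steps require $k$ to be perfect, since Cartier duality, the connectedness criterion, and the order count are all valid over an arbitrary field.
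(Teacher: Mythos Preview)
Your argument is correct. It is, however, the Cartier-dual mirror of the paper's proof rather than a genuinely different idea. The paper works directly on $G$: since $G$ is unipotent, $V$ has nontrivial kernel, hence the images $V^i(G^{(p^i)})$ form a strictly decreasing chain of subgroups, and an order count forces $V^n=0$. You instead pass to $H=G^\vee$, use that $H$ is connected so $\fr$ has nontrivial kernel, and run the strictly increasing chain of Frobenius kernels $H[\fr^i]$ with the same order count. The two filtrations are exchanged by Cartier duality, so the core step (a strict chain of length at most $n$ in a group of order $p^n$) is identical. What the paper's route buys is brevity: no twist bookkeeping and no appeal to duality are needed. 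What your route buys is a clean reformulation (``unipotent of order $p^n$ has $n_V\le n$'' is equivalent to ``infinitesimal of order $p^n$ has height $\le n$''), which is a useful statement in its own right.
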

\begin{proof}
We consider $V^n$ as morphism $G^{(p^n)}\to G$.
Since $G$ is unipotent the kernel of $V$ is not trivial, so in particular the image of $V$ has order strictly less than $n$. Iterating the argument and applying it to the subgroup image, we have that the image of $V^{i+1}$ is strictly contained in the image of $V^i$, for any $i\ge 0$. So after $n$-iteration the image is trivial.
 \end{proof}


Here some easy considerations about the conjecture.

\begin{lemm}\label{lemm:epi mono conj} Let $k$ be a field of positive characteristic and let $G_1$ and $G_2$ be two finite commutative unipotent group schemes over $k$.
\begin{itemize}
\item[(i)] Let $f:G_1\to G_2$ be an epimorphism  (resp. monomorphism) of group schemes with $n_V(G_1)= n_V(G_2)$. If the Conjecture \ref{conj1} is true for $G_2$ (resp. $G_1$) it is true for $G_1$ (resp. $G_2$).
\item[(ii)] If the Conjecture \ref{conj1} is true for $G_1$ and $G_2$ then it is true for $G_1\times G_2$. 

\item[(iii)] It is sufficient to prove the Conjecture \ref{conj1} under the
following assumptions 
\begin{itemize}
\item[(1)] $k$ is algebraically closed;
\item[(2)] $G$ is contained in $W_{n,k}$, Witt vectors (of length $n$) group scheme, where $n=n_V(G)$.
\end{itemize}

\item[(iv)] It is sufficient to prove the conjecture for \'etale finite group schemes and infinitesimal group schemes.
\end{itemize}
\end{lemm}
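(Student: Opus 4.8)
The plan is to isolate two comparison principles for essential dimension and one functoriality property of the Verschiebung, and then to deduce (i)--(iv) formally. First I record that $V$ is a natural transformation, so that for any homomorphism $f\colon G_1\to G_2$ one has $f\circ V_{G_1}^m=V_{G_2}^m\circ f^{(p^m)}$. Since the Frobenius twist of an epimorphism (resp. monomorphism) is again an epimorphism (resp. monomorphism), this gives $n_V(G_2)\le n_V(G_1)$ when $f$ is an epimorphism and $n_V(G_1)\le n_V(G_2)$ when $f$ is a monomorphism; in particular $n_V(G_1\times G_2)=\max(n_V(G_1),n_V(G_2))$, and $n_V$ is invariant under base field extension because $V$ commutes with base change. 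On the essential-dimension side I would use two facts: (a) for a closed immersion $G_1\hookrightarrow G_2$ of finite group schemes, $\ed_k G_1\le\ed_k G_2$ (monotonicity under subgroups: a generically free representation of $G_2$ restricts to one of $G_1$, so every $G_2$-compression is a $G_1$-compression, cf. \cite{BF}); and (b) for an epimorphism $f\colon G_1\twoheadrightarrow G_2$ with kernel $N$, if $\H^1(\spec K,G_1)\to\H^1(\spec K,G_2)$ is surjective for every extension $K/k$, then $\ed_k G_1\ge\ed_k G_2$. Claim (b) follows by descent: a $G_2$-torsor $\eta$ over $K$ lifts to a $G_1$-torsor $\xi$, which descends to a subfield $L$ with $\trdeg(L/k)\le\ed_k G_1$, and then $f_*$ of the descended torsor realizes $\eta$ over $L$. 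The surjectivity hypothesis holds here because $\H^2(\spec K,N)=0$ for every finite commutative unipotent $N$: by d\'evissage one reduces to the simple subquotients, the \'etale ones (which are $p$-power torsion) being killed by $\mathrm{cd}_p(K)\le 1$, and the infinitesimal ones ($\alpha_p$) by the Frobenius sequence $0\to\alpha_p\to\ga\xrightarrow{F}\ga\to 0$ together with $\H^{\ge 1}(\spec K,\ga)=0$.

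Granting these, (i) is immediate. In the monomorphism case, $\ed_k G_2\ge\ed_k G_1\ge n_V(G_1)=n_V(G_2)$ by (a) and the hypothesis on $G_1$. In the epimorphism case, $\ed_k G_1\ge\ed_k G_2\ge n_V(G_2)=n_V(G_1)$ by (b) and the hypothesis on $G_2$. Part (ii) then follows from (i): assuming without loss of generality $n_V(G_1)\ge n_V(G_2)$, the inclusion of the first factor $G_1\hookrightarrow G_1\times G_2$ is a monomorphism with $n_V(G_1)=\max(n_V(G_1),n_V(G_2))=n_V(G_1\times G_2)$, so the conjecture for $G_1$ propagates to $G_1\times G_2$ by the monomorphism case of (i).

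For (iii), the reduction (1) to $k$ algebraically closed uses that essential dimension does not increase under field extension (\cite[Prop.~1.5]{BF}) together with the invariance of $n_V$ under base change: if the conjecture holds for $G_{\bar k}$, then $\ed_k G\ge\ed_{\bar k}G_{\bar k}\ge n_V(G_{\bar k})=n_V(G)$. Over $\bar k$, for (2) I would produce inside $G$ a subgroup $G'$ with $n_V(G')=n:=n_V(G)$ that is isomorphic to a subgroup of $W_{n,k}$, and then invoke the monomorphism case of (i) to deduce the conjecture for $G$ from the assumed case of subgroups of $W_{n,k}$. Such a $G'$ is produced by Dieudonn\'e theory: choosing in the Dieudonn\'e module of $G$ a generator on which $V^{n-1}$ does not vanish, the monogenic subobject it generates is killed by $V^n$, has $V$-order exactly $n$, and, being cyclic, corresponds to a subgroup of the (truncated) Witt vector scheme $W_{n,k}$; see \cite{DG}. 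Finally (iv) combines the previous parts: after reducing to $\bar k$ as in (iii)(1), the connected--\'etale sequence splits over the perfect field $\bar k$, giving $G\cong G^{0}\times G^{\mathrm{et}}$ with $G^0$ infinitesimal unipotent and $G^{\mathrm{et}}$ \'etale unipotent, and (ii) then deduces the conjecture for $G$ from the conjecture for these two factors.

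The main obstacle is the structure-theoretic input in (iii)(2): locating, inside an arbitrary finite commutative unipotent $G$ of $V$-order $n$, a sub- (or, under the opposite Dieudonn\'e convention, quotient-) group of full $V$-order $n$ that embeds into $W_{n,k}$. Fixing the variance and the cyclicity correctly --- that a monogenic Dieudonn\'e module killed by $V^n$ is exactly the module of a subgroup of $W_{n,k}$, and choosing the corresponding case of (i) --- is where the argument must be handled with genuine Witt/Dieudonn\'e theory rather than formal properties of $\ed$. A secondary technical point is the vanishing $\H^2(\spec K,N)=0$ over imperfect $K$, which is why I phrase the d\'evissage through $\mathrm{cd}_p(K)\le 1$ and the additive Frobenius sequences rather than through a naive splitting of $G$.
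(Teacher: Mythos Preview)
Your treatment of (i), (ii), and (iv) is correct and essentially matches the paper. The paper cites \cite[Lemma~3.3]{TV} for the vanishing of $\H^2(\spec K,N)$ rather than spelling out the d\'evissage, but your argument is the standard one. For (ii) the paper uses the projection $G_1\times G_2\twoheadrightarrow G_i$ and the epimorphism case of (i), whereas you use the inclusion $G_i\hookrightarrow G_1\times G_2$ and the monomorphism case; the two are interchangeable.

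The one substantive divergence is (iii)(2). The paper does not go through Dieudonn\'e theory at all: it invokes the structure theorem \cite[V,\S1, Proposition~2.5]{DG}, which gives an embedding $G\hookrightarrow W_{n,k}^r$ once $V_G^n=0$, and then simply projects. Since $V^{n-1}\neq 0$ on $G$ and Verschiebung commutes with the projections, some coordinate image $G_{i_0}=p_{i_0}(G)\subseteq W_{n,k}$ still has $V^{n-1}\neq 0$, hence $n_V(G_{i_0})=n$; the epimorphism case of (i) applied to $G\twoheadrightarrow G_{i_0}$ finishes. This realizes your ``monogenic'' idea concretely and resolves the variance issue you flagged: under the contravariant Dieudonn\'e correspondence your cyclic \emph{sub}module corresponds to a \emph{quotient} of $G$, so the relevant case of (i) is the epimorphism one, not the monomorphism one. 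Your write-up asserts a \emph{subgroup} $G'\subseteq G$ of full $V$-order that embeds in $W_{n,k}$ and then appeals to the monomorphism case; that is not what Dieudonn\'e theory hands you, and it is exactly the point where your argument is presently incomplete. Replacing it by the $W_{n,k}^r$-embedding plus projection removes the obstacle entirely.
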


\begin{rema} It is easy to prove that if $f$ is an epimorphism (resp. monomorphism) then one always has $n_V(G_1)\ge n_V(G_2)$ (resp. $n_V(G_1)\le n_V(G_2)$).
\end{rema}
\begin{proof}
\begin{itemize}
\item[(i)] Let us suppose $f$ is an epimorphism. Then
$$
0\too \ker f \too G_1\too G_2\too 0 
$$
is exact. Then, since $\ker f$ is unipotent commutative, for any extension $K$ of $k$  we have that $H^2(\spec(K),\ker f)=0$
 (\cite[Lemma 3.3]{TV}).  
 So $H^1(\spec(K),G_1)\to H^1(\spec(K),G_2)$ is surjective. This implies, by  \cite[Lemma 1.9]{BF}, that $\ed_k G_1\ge \ed_k G_2$. 
So we have
$$
\ed_k G_1\ge \ed_k G_2\ge n_V(G_2)=n_V(G_1)
$$
and we are done.

If $f$ is a monomorphism it is even easier.  In fact we have $\ed_K G_1\le \ed_K G_2$ by \cite[Theorem 6.19]{BF} and so we can conclude as above, switching $G_1$ with $G_2$.
\item[(ii)] It is sufficient to remark that the $V$-order  of $G_1\times G_2$ is the maximum between the $V$-order of $G_1$ and that one of $G_2$. So the result comes from (i) using as morphism the projection over the group scheme with greater $V$-order.
\item[(iii)]
The essential dimension does not
increase after a  base change by \cite[Proposition 1.5]{BF}.
So we assume $k$ algebraically closed.
Let $n=n_V(G)$. 
Since $V_G^n=0$,   by \cite[V,\S1, Proposition 2.5]{DG}, there exists $r$ such that $G$ is contained in
$W_{n,k}^r$. For any $i=1,\dots, r$ let $p_i$ the projection of
$W_{n,k}^r$ on the $i^{th}$ components and let $G_i=p_i(G)$. Since $V^{n-1}$ is different from zero  and the Verschiebung is compatible with morphisms  there exists
an $i_0$ such that $V^{n-1}$ is different from $0$ over $G_{i_0}$.
Moreover we have an epimorphism $G\to G_{i_0}$. 
So it follows from part (i) 
that if the conjecture is true for $G_{i_0}$ it is true for $G$.
\item[(iv)] Over an algebraically closed field a finite group scheme is the direct product of his \'etale part and his connected (hence infinitesimal) part.
So the statement follows from $(ii)$. 
\end{itemize}

\end{proof}
%
%
%
%
%

%

We now treat the case of finite unipotent group schemes with $n_V(G)=2$. The case with trivial Verschiebung is immediate since finite group schemes have positive essential dimension.

\begin{prop}
The conjecture is true for group schemes $G$ with $n_V(G)=2$ if and only if it is true for the group scheme $\ker(V-\fr^m): W_{2,k}\to W_{2,k}$, for $m\ge 1$. 
\end{prop}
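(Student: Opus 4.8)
The equivalence has a trivial direction and a substantial one; the plan is to dispose of the former and reduce the latter to the machinery already assembled in Lemma~\ref{lemm:epi mono conj}.

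For the implication that requires nothing new, I first record that on $W_{2,k}$ the Verschiebung shifts Witt components, $V(a_0,a_1)=(0,a_0)$, so $V^2=0$ identically and every subgroup scheme of $W_{2,k}$ has $V$-order at most $2$. A direct computation identifies $\ker(V-\fr^m)$ with the infinitesimal group scheme $\{(a_1^{p^m},a_1): a_1^{p^{2m}}=0\}\cong \spec k[t]/(t^{p^{2m}})$, on which $V$ sends $(a_0,a_1)$ to $(0,a_1^{p^m})\ne 0$ for $m\ge 1$; hence $n_V(\ker(V-\fr^m))=2$. Thus each $\ker(V-\fr^m)$ is itself a group scheme of $V$-order $2$, so the truth of Conjecture~\ref{conj1} for \emph{all} such $G$ includes them, which is the ``only if''.

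For the converse I assume Conjecture~\ref{conj1} for every $\ker(V-\fr^m)$, $m\ge 1$. By part (iii) of Lemma~\ref{lemm:epi mono conj} I may take $k$ algebraically closed and $G\subseteq W_{2,k}$ with $n_V(G)=2$, and by parts (iv) and (ii) I may assume $G$ is either \'etale or infinitesimal. The backbone of the analysis is the exact sequence $0\to \ga \xrightarrow{V} W_{2,k} \xrightarrow{\pi} \ga \to 0$: intersecting with $G$ gives $0\to G_0\to G\to \bar G\to 0$ with $G_0=\ker(\pi|_G)$ and $\bar G=\pi(G)$ finite subgroups of $\ga$, and since $V|_G$ is the inclusion after $\pi$, the condition $n_V(G)=2$ is exactly $\bar G\ne 0$. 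In the infinitesimal case $G_0=\ker(\fr^a)$ and $\bar G=\ker(\fr^b)$, and using the endomorphisms $\fr,V$ of $W_{2,k}$ (which generate a Dieudonn\'e-type ring with $\fr V=V\fr=p$ and $V^2=0$) I would exhibit, for a suitable $m$, either an epimorphism $G\twoheadrightarrow \ker(V-\fr^m)$ or a monomorphism $\ker(V-\fr^m)\hookrightarrow G$, both respecting $V$-order; part (i) of Lemma~\ref{lemm:epi mono conj} then transfers the conjecture from the model group to $G$.

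The \'etale case cannot be reached this way, since $\ker(V-\fr^m)$ is infinitesimal and hence admits neither an epimorphism onto nor a monomorphism from a nontrivial \'etale group. Here I would first reduce, via parts (i) and (ii) of Lemma~\ref{lemm:epi mono conj}, to $G=\ZZ/p^2$ (projection onto a cyclic factor of order $p^2$), which by Remark~\ref{rema:Ledet equivalent level n} is the $n=2$ case of Ledet. To connect this to the infinitesimal models I would construct a finite flat group scheme $\mathcal G$ over a complete discrete valuation ring with residue field $k$, living inside $W_{2}$ over the base, whose special fibre is $\ker(V-\fr)$ and whose generic fibre is $\ZZ/p^2$ (such interpolating $p^2$-torsion models are supplied by Kummer--Artin--Schreier--Witt / Sekiguchi--Suwa theory), and then invoke lower semicontinuity of essential dimension in a flat family, which yields $\ed(\mathcal G_\eta)\ge \ed(\mathcal G_s)$, i.e. $\ed(\ZZ/p^2)\ge \ed(\ker(V-\fr))\ge 2$. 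The hard part is exactly this last step: producing the interpolating family with the prescribed fibres and proving the specialization inequality in the direction generic $\ge$ special; the factorisation bookkeeping inside $\End(W_{2,k})$ needed for the infinitesimal case is the secondary technical obstacle.
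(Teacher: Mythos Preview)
Your handling of the \'etale case is the real gap. You correctly reduce to $G=\ZZ/p^2\ZZ$, but then you try to deduce $\ed_k(\ZZ/p^2\ZZ)\ge 2$ from the hypothesis on the infinitesimal groups $\ker(V-\fr^m)$ via a mixed-characteristic deformation and a ``semicontinuity'' inequality $\ed(\mathcal G_\eta)\ge \ed(\mathcal G_s)$. That inequality is not a theorem you can cite; the known genericity/specialization results for essential dimension do not give this direction in the generality you need, so this step is unjustified. More to the point, the whole detour is unnecessary: $\ed_k(\ZZ/p^2\ZZ)=2$ is already known unconditionally (this is the $n=2$ case of Ledet's conjecture, see \cite[Proposition~7.10]{BF}), and the paper simply quotes it. Once you use that, the \'etale case is finished and the hypothesis on $\ker(V-\fr^m)$ is only needed for the infinitesimal part.

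In the infinitesimal case your sketch is also incomplete and diverges from the paper. You set up the sequence coming from $0\to\ga\xrightarrow{V}W_{2,k}\to\ga\to 0$ and then assert that ``for a suitable $m$'' you would produce an epimorphism or monomorphism between $G$ and $\ker(V-\fr^m)$, without saying how. The paper instead filters by Frobenius: from $0\to\ker\fr\to G\to\fr(G)\to 0$ it observes that if $V(\ker\fr)\neq 0$ one is done by the height~$1$ case (Proposition~\ref{prop:conj height 1}) via Lemma~\ref{lemm:epi mono conj}(i), and if $V(\fr(G))\neq 0$ one replaces $G$ by $\fr(G)$ and iterates. The terminal case has $\ker\fr=\alpha_{p,k}$ and $\fr(G)=\alpha_{p^m,k}$; then $\hom_k(G,\alpha_{p,k})\cong k$, so $V=a\fr^m$ for some $a\in k^\times$, whence $G=\ker(V-a\fr^m)$ by an order count, and over $\bar k$ all such groups with fixed $m$ are isomorphic. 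This is an actual identification $G\cong\ker(V-\fr^m)$, not merely an epi/mono, and it is what your ``I would exhibit'' is missing.
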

\begin{proof}
Clearly we have only to prove the \textit{if part}.
Using Lemma \ref{lemm:epi mono conj}(iv), we have to prove the conjecture only for infinitesimal group schemes since the \'etale case, which we can reduce to $G=\ZZ/p^2\ZZ$ by Remark \ref{rema:Ledet equivalent level n},  is known (\cite[Proposition 7.10]{BF}). In particular the Frobenius is not injective.
By the above Lemma we can suppose that $k$ is algebraically closed and we can suppose that $G$ is contained in $W_{2,k}$.
We have to prove that the essential dimension is at least $2$.
Let us consider the exact sequence
$$
0\too \ker \fr \too G\too \fr(G) \too 0.
$$
If $V(\ker \fr)\neq 0$ then we are done by Lemma \ref{lemm:epi mono conj}(i) and Proposition \ref{prop:conj height 1}.
If $V(\fr (G))\neq 0$ we are reduced to prove the conjecture for $\fr(G)$, again by Lemma \ref{lemm:epi mono conj}(i).
If $m$ is the smallest integer such that $\fr^m(G)=0$ then 
iterating the argument (at most $m-1$ times) we finally have two possibilities: 
we have to prove the conjecture for a group scheme with $V(\ker \fr)=V(\fr (G))=0$ or for a group annihilated by $\fr$ (this happens if we have to iterate exactly $m-1$ times). The second case is already known by Proposition \ref{prop:conj height 1}. Therefore we can suppose we are in the first case. Therefore $\ker \fr$ and $\fr(G)$ are contained in $\GG_{a,k}$. This means that $\ker \fr =\alpha_{p,k}$ and $\fr(G)=\alpha_{p^m,k}$ for some $m$.
So we can suppose that we have an exact sequence
$$
0\too \alpha_{p,k} \too G\too \alpha_{p^m,k} \too 0.
$$


Now we have that $V$ and $\fr^{m}$ induce a morphism from $G$ to $\alpha_{p,k}$. Moreover
$$
\hom_k(G,\alpha_{p,k})=\hom_k(G/F(G^{(1/p)}),\alpha_{p,k}).
$$
But $G$ has order $p^{m+1}$ and $F(G^{(1/p)})$ has order $p^m$, so
$G/F(G^{(1/p)})\simeq \alpha_{p,k}$.
Then $$\hom_k(G,\alpha_{p,k})=\hom_k(\alpha_{p,k},\alpha_{p,k})=k.$$ This implies that
$$
V=a\fr^{m}
$$
for some $a\in k$. Therefore $G$ is contained in $\ker(V-a\fr^m):W_{2,k}\to W_{2,k}$. Since these two group schemes have the same order, as it is easy to check, they are equal.

Finally we remark that since $k$ is algebraically closed it is straightforward to prove that, if we fix $m$, all these groups are isomorphic between them.

\end{proof}
If $m=1$ and $k$ is algebraically  closed the group scheme in the Proposition is nothing else that the $p$-torsion group scheme of a supersingular elliptic curve.

Finally we give a consequence of the conjecture for group scheme of essential dimension one.
\begin{prop}
Let us suppose that conjecture \ref{conj1} is true for group schemes with $V$-order $2$. 
\begin{itemize}
\item[(i)] If $k$ is algebraically closed, a finite   commutative unipotent group scheme has essential dimension $1$ if and only if it is isomorphic to $\alpha_{p^m,k}\times (\ZZ/p\ZZ)^r$ for some $m,r>0$.
\item[(ii)] If a commutative unipotent group scheme has essential dimension $1$ then it is a twisted form of $\alpha_{p^m,k}\times (\ZZ/p\ZZ)^r$ for some $m,r>0$.
\item[(iii)] An infinitesimal commutative unipotent group scheme over a perfect field has essential dimension $1$ if and only if it is isomorphic to $\alpha_{p^m,k}$ for some $m>0$.
\end{itemize}
\end{prop}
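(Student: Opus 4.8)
The plan is to derive all three parts from two lower bounds on essential dimension combined with a structural analysis of group schemes killed by the Verschiebung. The two inputs are $\ed_k G\ge \dim_k\Lie(G)$ for any finite group scheme (\cite[Theorem 1.2]{TV}), and $\ed_k G\ge n_V(G)$, which by hypothesis is available when $n_V(G)=2$ and holds trivially when $n_V(G)=1$. The first observation is that $\ed_k G=1$ forces $n_V(G)=1$. Indeed, if $n_V(G)=n\ge 2$, the image $H\subseteq G$ of $V^{n-2}\colon G^{(p^{n-2})}\to G$ satisfies $V_H^2=0$ and $V_H\neq 0$ (by functoriality of $V$, using $V^{n}=0$ but $V^{n-1}\neq 0$), so $n_V(H)=2$; the monomorphism $H\hookrightarrow G$ gives $\ed_k H\le\ed_k G=1$ via \cite[Theorem 6.19]{BF}, contradicting $\ed_k H\ge 2$. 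Hence $V=0$ on $G$, and since $V\circ\fr=[p]_G$ the group $G$ is killed by $p$; moreover $\dim_k\Lie(G)\le 1$.

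For part (i) take $k$ algebraically closed. The connected--\'etale decomposition gives $G\simeq G^0\times G^{\mathrm{et}}$; as $V=0$ and $p$ kills $G$, the \'etale factor is elementary abelian, $G^{\mathrm{et}}\simeq(\ZZ/p\ZZ)^r$, while $\Lie(G)=\Lie(G^0)$. If $G^0\neq 0$ then $\dim_k\Lie(G^0)=1$. Because $V=0$, the group $G^0$ embeds into some $\ga^s$ (\cite[V,\S1, Proposition 2.5]{DG} with $n=1$, as in Lemma \ref{lemm:epi mono conj}(iii)); choosing a coordinate projection $\ga^s\to\ga$ nonzero on the line $\Lie(G^0)$ yields a homomorphism $G^0\to\ga$ injective on Lie algebras, whose kernel is infinitesimal with trivial Lie algebra, hence trivial. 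Thus $G^0$ is a connected finite subgroup scheme of $\ga$, and each such is $\ker(\fr^m)=\alpha_{p^m,k}$ (the coordinate ring is local with maximal ideal $(x)$, forcing an additive equation $c\,x^{p^m}$). This gives the forward implication. For the converse I would embed $\alpha_{p^m,k}\times(\ZZ/p\ZZ)^r$ into a \emph{single} $\ga$, using $\alpha_{p^m,k}=\ker\fr^m$ together with an $r$-dimensional $\FF_p$-subspace $V\subseteq k=\ga(k)$: the addition map $\alpha_{p^m,k}\times V\to\ga$ is a monomorphism (infinitesimal and \'etale factors meet only in $0$) between group schemes of equal order $p^{m+r}$, hence a closed immersion, so $\ed_k\le\dim\ga=1$, and equality holds by nontriviality.

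Part (ii) is then formal: essential dimension does not grow under base extension (\cite[Proposition 1.5]{BF}), so $\ed_{\bar k}(G_{\bar k})=1$, and part (i) over $\bar k$ shows $G_{\bar k}\simeq\alpha_{p^m,\bar k}\times(\ZZ/p\ZZ)^r$, whence $G$ is a twisted form of $\alpha_{p^m,k}\times(\ZZ/p\ZZ)^r$. For part (iii), with $k$ perfect and $G$ infinitesimal, the connected--\'etale decomposition has no \'etale part, so the argument of part (i) applies over $k$ itself (the embedding $G\hookrightarrow\ga^s$ and the projection step are valid over any perfect field), giving $G\simeq\alpha_{p^m,k}$; the converse is the embedding $\alpha_{p^m,k}\hookrightarrow\ga$ already used.

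The main obstacle is the passage from numerical constraints to the actual isomorphism type. The bound $\ed_k G\ge\dim_k\Lie(G)$ alone cannot exclude groups like $\ZZ/p^2\ZZ$ (trivial Lie algebra but essential dimension $2$), so the assumed case $n_V=2$ of Conjecture \ref{conj1} is genuinely needed to force $V=0$; and the delicate structural point is that an infinitesimal group scheme with $V=0$ and one-dimensional Lie algebra must sit inside a single $\ga$ and hence be $\alpha_{p^m,k}$, for which the triviality of an infinitesimal subgroup with vanishing Lie algebra is the key mechanism.
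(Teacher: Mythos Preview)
Your argument is correct and the overall strategy matches the paper's: both first use the hypothesis on $V$-order $2$ to force $V=0$ via the subgroup $H=V^{\,n-2}(G^{(p^{n-2})})$, and then invoke $\ed_k G\ge\dim_k\Lie(G)$ to bound the infinitesimal part. The differences lie in the structural steps that follow.

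For the classification in (i) after $V=0$, the paper quotes \cite[IV,\S3, Corollaire 6.9]{DG} to obtain directly $G\simeq\prod_i\alpha_{p^{n_i},k}\times(\ZZ/p\ZZ)^r$ and then uses $\dim\Lie(G)\le 1$ to cut down to a single $\alpha$-factor. You instead argue by hand: connected--\'etale decomposition, embedding $G^0\hookrightarrow\ga^s$, and a Lie-algebra projection to land inside a single $\ga$. Your route is more self-contained and makes explicit the mechanism (an infinitesimal group with trivial Lie algebra is trivial), while the paper's is shorter by outsourcing to Demazure--Gabriel.

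For (iii) the two proofs are genuinely different. The paper deduces from (i) that $G$ is a twisted form of $\alpha_{p^m,k}$, then computes $\aut_{\bar k}(\alpha_{p^m,\bar k})\simeq\GG_{m,\bar k}\times\GG_{a,\bar k}^{\,m-1}$ and uses vanishing of $\H^1$ for this special group to conclude there are no nontrivial forms. You bypass the cohomological step entirely by running the embedding-and-projection argument of (i) directly over the perfect field $k$; this is legitimate because the embedding of a $V$-killed unipotent group into a power of $\ga$ from \cite[V,\S1]{DG} holds over perfect fields, not only algebraically closed ones. Your approach is more elementary and avoids the automorphism computation; the paper's approach has the virtue of isolating the descent obstruction explicitly, which could be reused in related classification questions.
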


\begin{proof}
Clearly (i) implies (ii).
We now prove (i). Let $k$ be algebraically closed. One has just to prove the \textit{only if} part since  $\alpha_{p^m,k}\times (\ZZ/p\ZZ)^r$ is contained in $\GG_{a,k}$ if $k$ is algebraically closed . 
First of all we remark that if the essential dimension is $1$ then the Verschiebung is trivial. 
In fact if $n_G(V)>1$ then $V^{n_G(V)-2}(G^{(p^{n_G(V)-2})})$ is a subgroup scheme of $G$ with $V$-order $2$. 
Therefore if  the conjecture is true for group schemes with $V$-order $2$ then $G$ would have essential dimension strictly greater than $1$.
So  $V=0$. 
Then by  \cite[IV \S3, Corollaire 6.9]{DG} we have that $G$ is isomorphic to $\prod_{i=1}^l\alpha_{p^{n_i},k}\times  (\ZZ/p\ZZ)^r$ for some $l,n_i,r>0$.
Since the essential dimension of $G$ is one then, by \cite[Theorem 1.2]{TV}, we have $\dim Lie(G)\le 1$.
So $G$ is isomorphic to $\alpha_{p^m,k}\times (\ZZ/p\ZZ)^r$ for some $m,r>0$

Using (i), to prove (iii) we have just to 
%
%
show that $\alpha_{p^m,k}$ has no nontrivial twisted forms. 
This follows from the fact that $Aut_{\bar{k}}(\alpha_{p^n,\bar{k}})$ is isomorphic to $\GG_{m,\bar{k}}\times \GG_{a,\bar{k}}^{m-1}$. 
Since twisted forms are classified by the first cohomology of this group then there is only one twisted form.

\end{proof}
\begin{rema}
 For \'etale group schemes the result is unconditional since the conjecture is true.
 This particular case can be deduced by the case of constant group schemes (\cite[Proposition 5 and 7]{Le1}).
\end{rema}

Finally we give a consequence of the conjecture for the essential dimension of abelian varietis in positive characteristic.
\begin{prop}\label{prop:ed ab var}
Let $A$ be a nontrivial abelian variety  over a field  $k$ of characteristic $p>0$. If the Conjecture \ref{conj1} is true then
$$
\ed_k A=+\infty.
$$
\end{prop}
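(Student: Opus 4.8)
The plan is to bound $\ed_k A$ from below by the essential dimensions of a sequence of finite unipotent subgroup schemes of $A$ whose $V$-order tends to infinity; Conjecture \ref{conj1} then forces these to grow without bound. First I would reduce to the case where $k$ is algebraically closed. By \cite[Proposition 1.5]{BF} essential dimension does not increase under the extension $k\In\bar k$, so $\ed_k A\ge \ed_{\bar k}A_{\bar k}$, and it suffices to prove $\ed_{\bar k}A_{\bar k}=+\infty$. From now on $k=\bar k$ is perfect, so every Frobenius twist $A^{(p^n)}$ is $k$-isomorphic to $A$, and in particular $\ed_k A^{(p^n)}=\ed_k A$.

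Next I would introduce $H_n:=\ker\big(V^n\colon A^{(p^n)}\to A\big)$. Since $V\circ\fr=[p]$ and $\fr$ has degree $p^{\dim A}$, the Verschiebung $V\colon A^{(p)}\to A$ is an isogeny of degree $p^{\dim A}>1$, so $\deg(V^j)=p^{j\dim A}$ and the kernels $\ker(V^j)$ grow strictly with $j$. Hence $H_n$ is a finite commutative group scheme on which $V$ is nilpotent of index exactly $n$ (the strict inclusion $\ker(V^{n-1})\subsetneq\ker(V^n)=H_n$ shows $V^{n-1}\neq 0$ on $H_n$). Nilpotence of $V$ means $H_n$ is unipotent, with $n_V(H_n)=n$. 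Applying Conjecture \ref{conj1} over $\bar k$ I obtain $\ed_k H_n\ge n_V(H_n)=n$ for every $n\ge 1$.

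Finally I would compare $\ed_k A$ with $\ed_k H_n$ through the isogeny sequence
\[
0 \too H_n \too A^{(p^n)} \xrightarrow{\,V^n\,} A \too 0 .
\]
For every extension $K/k$ the long exact cohomology sequence identifies the kernel of $\iota\colon\H^1(K,H_n)\to\H^1(K,A^{(p^n)})$ with the quotient $A(K)/V^nA^{(p^n)}(K)$ of the $\dim A$-dimensional group $A(K)$. Thus the morphism of functors $\H^1(-,H_n)\to\H^1(-,A^{(p^n)})$ has fibres parametrised by a quotient of $A$, and the fibre-dimension inequality for essential dimension yields
\[
\ed_k H_n \le \ed_k A^{(p^n)} + \dim A = \ed_k A + \dim A .
\]
Combining this with $\ed_k H_n\ge n$ gives $\ed_k A\ge n-\dim A$ for all $n$, whence $\ed_k A=+\infty$.

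The hard part will be the last inequality: one must convert the set-theoretic description of the fibres of $\iota$ into a genuine bound on fields of definition. Concretely, after descending a class $\eta=\iota(\xi)$ in $\H^1(K,A^{(p^n)})$ to a subfield $L$ with $\trdeg(L/k)\le\ed_k A$, one needs a preimage of $\eta$ in $\H^1(L,H_n)$ \emph{without} enlarging $L$ uncontrollably, so that the remaining ambiguity is genuinely the $\dim A$ parameters coming from an $A$-point. Making this descent uniform is exactly the content of the fibration lemma, and it is the only delicate step; the reduction to $\bar k$, the computation $n_V(H_n)=n$, and the unipotence of $H_n$ are all routine.
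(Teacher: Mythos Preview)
Your argument is correct and follows the same overall strategy as the paper: exhibit finite unipotent subgroup schemes of (a twist of) $A$ whose $V$-order is $n$, apply Conjecture~\ref{conj1}, and then compare with $\ed_k A$ via a subgroup-type inequality. Two differences are worth noting. First, you take $H_n=\ker(V^n)$, which is automatically unipotent with $n_V(H_n)=n$ by the degree count you give; the paper instead works with $A[p^n]$ and must invoke the structure of $A[p^n]$ over $\bar k$ to isolate its unipotent part $U_n$ and check $V^{n-1}\neq 0$ there. Your choice of subgroup is tidier. Second, for the key inequality $\ed_k H_n\le \ed_k A+\dim A$ you set up a fibration argument on the cohomology of the isogeny sequence and flag the descent of a preimage as the delicate step; the paper bypasses this entirely by citing the subgroup inequality \cite[Principle~2.9]{Bro}, which for any closed subgroup $H\subseteq G$ yields $\ed_k H\le\ed_k G+\dim G$. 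Since $H_n\subseteq A^{(p^n)}$, that single citation already gives what you need, and there is no descent to carry out by hand.
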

\begin{proof}
As usual we can  suppose that $k$ is algebraically closed.
For any positive integer $n$ we call $A[p^n]$ the group scheme of $p^n$-torsion of $A$.
Since $A[p^n]\In A$, by \cite[Principle 2.9]{Bro}\footnote{We remark that the proof of \cite[Theorem 6.19]{BF}, which we used before and which is the standard reference for this result, works only for affine group schemes since it uses versal torsors. We remark that in \cite{BF} an \textit{algebraic group} is intended to be affine.}, we have that
\begin{equation}\label{eq:ed Apn}
\ed_k A[p^n]\le \ed_k A + \dim A.
\end{equation} 
Now by \cite[pag. 147]{M} we have that there exists an integer $r\ge 0$ such that
$$
A[p^n]\simeq (\ZZ/p^n\ZZ \times \mu_{p^n})^r\times G^0_n
$$ 
where $G^0_n$ is unipotent and infinitesimal.
If $r=0$ then we remark that $V^{n-1}$ is not trivial over 
$G^0_n=A[p^n]$ otherwise $A[p^n]=A[p^{n-1}]$.
If $r>0$ then $V^{n-1}((\ZZ/p^n\ZZ)^r)\neq 0$.
So, if we call $U_n$ the unipotent part of $A[p^n]$
we have that $V^{n-1}$ is not trivial over $U_n$. So by  conjecture \ref{conj1}, and \cite[Theorem 6.19]{BF}, we have that 
$$
\ed_k A[p^n]\ge \ed U_n \ge n.
$$
which, together with \eqref{eq:ed Apn}, gives $\ed_k A=+\infty$.

\end{proof}

\bibliographystyle{amsalpha}
\bibliography{essdimdvr_weak4}

\end{document}